\numberwithin{equation}{section}
\theoremstyle{plain}
\newtheorem{thm}{Theorem}[section]
\newtheorem{cor}[thm]{Corollary}
\newtheorem{lem}[thm]{Lemma}
\theoremstyle{definition}
\newcommand{\be}{\begin{equation}}
\newcommand{\ee}{\end{equation}}
\def\R{{\mathbb R}}
\def\S3{{{\mathbb S}^3}}
\def\SU2{{{\rm SU}(2)}}
\def\Rn{{{\mathbb R}^n}}
\def\p#1{{\left({#1}\right)}}
\def\abs#1{{\left|{#1}\right|}}
\def\Rn{{\mathbb R}^n}
\def\R2n{{\mathbb R}^{2n}}
\def\S{{\mathcal S}}
\def\Rn{{\mathbb R}^n}
\def\R2{{\mathbb R}^2}
\def\R2n{{\mathbb R}^{2n}}
\def\S{{\mathcal S}}
\begin{document}

\title[On global inversion of homogeneous maps]
{On global inversion of homogeneous maps}

\author[]{Michael Ruzhansky and Mitsuru Sugimoto}
\address{
  Michael Ruzhansky:
  \endgraf
  Department of Mathematics
  \endgraf
  Imperial College London
  \endgraf
  180 Queen's Gate, London SW7 2AZ, UK
  \endgraf
  {\it E-mail address} {\rm m.ruzhansky@imperial.ac.uk}
  \endgraf
  \medskip
  Mitsuru Sugimoto:
  \endgraf
  Graduate School of Mathematics
  \endgraf
  Nagoya University
  \endgraf
  Furocho, Chikusa-ku, Nagoya 464-8602, Japan
  \endgraf
  {\it E-mail address} {\rm sugimoto@math.nagoya-u.ac.jp}
  }
  
\thanks{The first author was supported by the EPSRC
 Leadership Fellowship EP/G007233/1 and by EPSRC Grant
 EP/K039407/1}
\date{\today}

\subjclass{Primary 26B10; Secondary 26B05, 26-01}
\keywords{Inverse function theorem}

\begin{abstract}
In this note we
prove a global inverse function theorem for homogeneous mappings on $\Rn$.
The proof is based on an adaptation of the 
Hadamard's global inverse theorem which provides conditions for a function to be globally
invertible on $\Rn$. For the latter adaptation, we give a short elementary proof assuming a topological result. 
\end{abstract}

\maketitle

\section{Introduction}

The purpose of this note is to prove a global inverse function for homogeneous 
mappings on $\Rn$. The main difficulty is in the fact that such mappings are not
smooth at the origin and thus, the known global inverse function
theorems on $\Rn$ are
not readily applicable.
The main motivation for studying such inverses are applications
to the global invertibility of Hamiltonian flows, and further applications to
construction of suitable phase functions of Fourier integral operators, 
a topic that will be addressed elsewhere.

Thus, our aim here is to establish the existence and the following properties for the
global inverses of homogeneous mappings. 

\begin{thm}\label{COR-APP:global-inverse-hom}
Let $n\geq 3$ and $1\leq k\leq\infty$.
Let $f:\Rn\backslash 0\to\Rn$ 
be a positively homogeneous mapping of order 
$\varkappa>0$, i.e.
$$
f(\tau\xi)=\tau^\varkappa f(\xi) \textrm{ for all } \tau>0,
\ \xi\not=0.
$$
Assume that $f\in C^k(\Rn\backslash 0)$ and
that its Jacobian never vanishes on $\Rn\backslash 0$. 
Then $f$ is bijective from $\Rn\backslash 0$ to 
$\Rn\backslash 0$, its global inverse satisfies
$f^{-1}\in C^k(\Rn\backslash 0)$ and is positively 
homogeneous of order $1/\varkappa$.
Moreover, if we extend $f$ to $\Rn$ by setting $f(0)=0$,
the extension is a global homeomorphism on $\Rn$.
\end{thm}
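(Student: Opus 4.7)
The plan is to use the positive homogeneity to reduce the global invertibility of $f$ on $\Rn\setminus 0$ to that of an induced map $g$ on the unit sphere $\mathbb{S}^{n-1}$, and then to exploit the compactness of $\mathbb{S}^{n-1}$ together with its simple connectedness for $n\geq 3$ to carry out an adaptation of Hadamard's argument.

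As a preliminary I would check that $f$ does not vanish on $\Rn\setminus 0$: if $f(\xi_0)=0$ for some $\xi_0\neq 0$, homogeneity would force $f(\tau\xi_0)=0$ for all $\tau>0$, contradicting the local injectivity supplied by the nonvanishing Jacobian. Hence $f$ takes $\Rn\setminus 0$ into $\Rn\setminus 0$, and it is legitimate to set
$$
g:\mathbb{S}^{n-1}\to\mathbb{S}^{n-1},\qquad g(\omega):=\frac{f(\omega)}{\abs{f(\omega)}}.
$$
Writing $\xi=r\omega$ on the source and $\eta=s\theta$ on the target, the identity $f(r\omega)=r^{\varkappa}\abs{f(\omega)}\,g(\omega)$ shows that, in these polar coordinates, the Jacobian of $f$ has a block triangular form whose determinant factors as a nonzero radial term (since $\varkappa>0$ and $\abs{f(\omega)}>0$) times the Jacobian of $g$ on $\mathbb{S}^{n-1}$. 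The hypothesis $\det Df\neq 0$ therefore makes $g$ a $C^k$ local diffeomorphism of $\mathbb{S}^{n-1}$.

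The decisive topological step is to upgrade $g$ to a global diffeomorphism. Compactness of $\mathbb{S}^{n-1}$ together with the open-mapping property make $g(\mathbb{S}^{n-1})$ both open and closed, hence equal to $\mathbb{S}^{n-1}$; a standard compactness argument using the local-diffeomorphism neighborhoods then shows that $g$ is a finite-sheeted covering map; and finally for $n\geq 3$ the sphere $\mathbb{S}^{n-1}$ is simply connected, forcing the covering to be one-sheeted and $g$ to be a $C^k$ diffeomorphism. This is precisely where the dimension assumption and the ``topological result'' mentioned in the abstract enter, and I expect it to be the main obstacle.

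Once $g^{-1}$ is available, the inverse of $f$ can be written explicitly: for $\eta=s\theta\in\Rn\setminus 0$, put $\omega:=g^{-1}(\theta)$ and $r:=\bigl(s/\abs{f(\omega)}\bigr)^{1/\varkappa}$, so that $f(r\omega)=\eta$ and this representation is clearly unique. The resulting formula makes positive homogeneity of order $1/\varkappa$ immediate, $C^k$ smoothness on $\Rn\setminus 0$ follows from the classical local inverse function theorem at each nonzero point, and the extension $f(0):=0$ is a homeomorphism because $\abs{f(\xi)}\leq C\abs{\xi}^{\varkappa}$ together with the analogous bound $\abs{f^{-1}(\eta)}\leq C'\abs{\eta}^{1/\varkappa}$ yields continuity at the origin in both directions.
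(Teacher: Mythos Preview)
Your argument is correct, but it follows a genuinely different route from the paper. The paper does not pass to the sphere at all: it extends $f$ by $f(0)=0$, uses homogeneity to get the two-sided estimate $c_{0}|\xi|^{\varkappa}\leq |f(\xi)|\leq C|\xi|^{\varkappa}$, and then applies directly an adapted Hadamard theorem (proved in the paper) for maps $\Rn\to\Rn$ that are $C^{1}$ with nonvanishing Jacobian away from a single point; that theorem in turn rests on the same topological fact you invoke, namely that a proper local diffeomorphism into a simply connected manifold is a global diffeomorphism, applied with $M=N=\Rn\setminus 0$. By contrast, you factor $f$ through polar coordinates, show via a block-triangular Jacobian computation that the normalised map $g(\omega)=f(\omega)/|f(\omega)|$ is a local $C^{k}$ diffeomorphism of $\mathbb{S}^{n-1}$, and then run the covering-space argument on the sphere, where properness is automatic by compactness. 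What the paper's approach buys is a reusable intermediate result (the punctured Hadamard theorem) that applies to non-homogeneous maps as well; what your approach buys is an explicit formula for $f^{-1}$ in polar coordinates and a cleaner properness step, at the cost of the extra Jacobian computation relating $\det Df$ to $\det Dg$. Both rely on simple connectedness of the relevant $(n-1)$- or $n$-dimensional punctured/spherical space for $n\geq 3$, which is where the dimensional hypothesis enters in each case.
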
 

We remark that for $n=2$ the conclusion is not always true
because, for example, the Jacobian of $f(x,y)=(x^2-y^2,2xy)$ never vanishes
on $\mathbb R^{2}\backslash 0$ but $f$ is not globally invertible since
$f(x,y)=f(-x,-y)$.

\medskip
Our proof is based on the application of an adaptation of the Hadamard
global inverse theorem. Thus, the second aim of this note is to give a short elementary proof
for it assuming a well-known topological result (very likely also known to Hadamard).

\medskip
This kind of global inverse function theorem is a classical subject 
and of independent interest, but sometimes it also plays an important role 
when we discuss the global $L^2$ boundedness of oscillatory integrals.
For example, Asada-Fujiwara \cite{AF} established this boundedness
based on the global invertibility of maps defined by
phase functions which are smooth everywhere.
Generalising such results to the case of homogeneous phase as in Theorem
\ref{COR-APP:global-inverse-hom} is also 
important but it is not straightforward because of the 
singularity at the origin.
The global $L^{2}$ boundedness of oscillatory integrals with phases as in 
Theorem \ref{COR-APP:global-inverse-hom}
has been analysed by the authors in \cite{RS-CPDEs} and applied to 
questions of global smoothing for partial differential equations in \cite{RS-MA}.
The application of Theorem \ref{COR-APP:global-inverse-hom} to the global 
analysis of hyperbolic partial differential equations  and the corresponding
Hamiltonian flows will appear elsewhere.

\section{Global inverses}


Let us start with a topological result we assume.
A differentiable map between manifolds is called a $C^{1}$-diffeomorphism if
it is one-to-one and its inverse is also differentiable.
A mapping $f$ is called proper if $f^{-1}(K)$
is compact whenever $K$ is compact.

\begin{thm}\label{PROP-APP:NR}
Let $M$ and $N$ be connected, oriented, $d$-dimensional
$C^1$-manifolds, without boundary.
Let $f:M\to N$ be a proper $C^1$-map
such that the Jacobian $J(f)$ never vanishes.
Then $f$ is surjective.
If $N$ is simply connected in addition, then
$f$ is also injective.
\end{thm}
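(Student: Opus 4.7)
The plan is to use the non-vanishing Jacobian together with properness to exhibit $f$ as a covering map, and then to read off both conclusions from classical covering-space theory. Surjectivity will follow from a connectedness argument that makes no use of any hypothesis on $\pi_1(N)$, while injectivity in the simply connected case reflects the fact that a connected covering of a simply connected base has exactly one sheet.

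First I would apply the inverse function theorem at each point of $M$: since $J(f)\neq 0$ and $\dim M=\dim N=d$, every point has an open neighbourhood on which $f$ restricts to a $C^1$-diffeomorphism onto its image. In particular $f$ is a local diffeomorphism, and hence an open map. Second, I would show that $f$ is also a closed map. Because $M$ and $N$ are locally compact Hausdorff, a proper continuous map is automatically closed: given a closed set $F\subset M$ and a point $y\in\overline{f(F)}$, choose a compact neighbourhood $K$ of $y$; then $f^{-1}(K)$ is compact by properness, $F\cap f^{-1}(K)$ is compact, $f(F\cap f^{-1}(K))$ is closed in $N$, and $y$ must lie in it. Combining the two properties, $f(M)$ is a nonempty open and closed subset of the connected space $N$, so $f(M)=N$ and $f$ is surjective.

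For injectivity I would prove that $f$ is a covering map. The fibre $f^{-1}(y)$ is discrete (by the local-diffeomorphism property) and compact (by properness), hence finite: write $f^{-1}(y)=\{x_1,\dots,x_m\}$ and pick pairwise disjoint open neighbourhoods $V_i\ni x_i$ on which $f$ restricts to a diffeomorphism. Closedness of $f$ guarantees that $W:=\bigl(\bigcap_{i=1}^m f(V_i)\bigr)\setminus f\bigl(M\setminus\bigcup_{i=1}^m V_i\bigr)$ is an open neighbourhood of $y$, and $f^{-1}(W)$ is the disjoint union of the open sets $U_i:=V_i\cap f^{-1}(W)$, each of which is mapped diffeomorphically onto $W$ by $f$. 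Hence $f$ is a covering map; since $M$ is connected and $N$ is simply connected, the number of sheets equals the index $[\pi_1(N):f_*\pi_1(M)]=1$, which is exactly the injectivity of $f$.

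The main obstacle I expect is precisely the construction of the evenly covered neighbourhood $W$: it is there that properness (through the closedness of $f$) is essential to prevent additional preimages of nearby points from lying outside $\bigcup_i V_i$. The remaining ingredients \textemdash{} the inverse function theorem, the open-and-closed argument, and the fact that connected coverings of simply connected bases are trivial \textemdash{} are entirely standard.
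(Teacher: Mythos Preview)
Your argument is correct. Note, however, that the paper does not actually give its own proof of this theorem: it is explicitly \emph{assumed} as a known topological input, with surjectivity attributed to Nijenhuis--Richardson and injectivity to Gordon, together with the remark that a proper submersion between boundaryless manifolds is a fibre bundle, hence here a covering map, and that a simply connected base forces the covering to be trivial. Your proof is exactly an explicit, self-contained realisation of that remark.

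Two small comparative observations are worth making. First, your surjectivity argument (local diffeomorphism $\Rightarrow$ open, proper $\Rightarrow$ closed, hence $f(M)$ is clopen in the connected space $N$) nowhere uses the orientation hypothesis in the statement; that hypothesis is there because the cited Nijenhuis--Richardson result treats the more delicate case of merely \emph{non-negative} Jacobian via a degree argument, where orientation is genuinely needed. For the strictly non-vanishing case your route is the cleaner one. Second, your construction of the evenly covered neighbourhood $W$ is precisely the step that makes rigorous what the paper only gestures at with the phrase ``fibre bundle''; identifying closedness of $f$ as the crucial consequence of properness at this point is exactly right.
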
 

This fact was known to Hadamard, 
but a rigorous proof for surjectivity can be found
in \cite{Nijenhuis-Richardson:Jacobians-1962}.
As for the injectivity, a precise proof
can be found in \cite[Section 3]{Gordon:inverse-function-1972}).
We remark that it follows 
in principle from the general fact that
a proper submersion between smooth
manifolds without boundary is a fibre bundle, meaning
in our setting that it is a covering map.
Since a simply connected manifold is its own
universal covering space, it implies 
that $f$ is a diffeomorphism.

We will mostly discuss 
the mappings $f:\Rn\to\Rn$,
in which case we denote its Jacobian by $J(f):=\det Df:=\det (\partial f_{i}/\partial x_{j})$.
The Hadamard global
inverse function theorem states:

\begin{thm}\label{Hadamard2}
A $C^1$-map $f:\Rn\to\Rn$ is a $C^1$-diffeomorphism
if and only if 
the Jacobian $\det Df(w)$ never vanishes and
$|f(y)|\to\infty$ whenever $|y|\to\infty$.
\end{thm}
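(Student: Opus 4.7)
The plan is to reduce Theorem~\ref{Hadamard2} to Theorem~\ref{PROP-APP:NR} applied with $M=N=\Rn$. The essential bridge between the two statements is the observation that, for continuous self-maps of $\Rn$, the coercivity condition $|f(y)|\to\infty$ as $|y|\to\infty$ is equivalent to properness.

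For the necessity direction the argument will be immediate. If $f$ is a $C^1$-diffeomorphism with $C^1$-inverse $g$, then differentiating $g\circ f=\mathrm{id}$ gives $Dg(f(x))\,Df(x)=I$, so $\det Df$ vanishes nowhere. Since $f$ is moreover a homeomorphism it is proper, and a short subsequence argument converts properness into coercivity: any sequence $y_k$ with $|y_k|\to\infty$ but $f(y_k)$ bounded would, along a subsequence, have $f(y_k)\to z$ and hence $y_k=g(f(y_k))\to g(z)$, a contradiction.

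For the sufficiency direction the first step is to convert coercivity into properness. Given a compact $K\subset\Rn$, the preimage $f^{-1}(K)$ is closed by continuity, and an unbounded sequence inside it would force $|f(y_k)|\to\infty$, contradicting $f(y_k)\in K$; hence $f^{-1}(K)$ is bounded, so compact. At this point I would invoke Theorem~\ref{PROP-APP:NR}: $\Rn$ is a connected, oriented, simply connected (being contractible) $C^1$-manifold without boundary, and $f$ is a proper $C^1$-map with non-vanishing Jacobian, so it is both surjective and injective, i.e. bijective. The $C^1$-regularity of $f^{-1}$ then follows from the classical local inverse function theorem at each point, completing the proof.

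The main obstacle here is essentially already handled: the deeper topological content, namely that a proper local diffeomorphism onto a simply connected target is a covering map and hence a global diffeomorphism, is packaged inside Theorem~\ref{PROP-APP:NR}. Once that result is taken as a black box, the only substantive task is the coercivity/properness equivalence for self-maps of $\Rn$, which is routine. The genuine difficulty of the theorem therefore lies not in this deduction but in the underlying covering-space argument embedded in Theorem~\ref{PROP-APP:NR}.
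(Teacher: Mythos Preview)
Your proposal is correct. The paper does not actually give a self-contained proof of Theorem~\ref{Hadamard2}; it states the result, attributes it to Hadamard, and refers to Gordon's two proofs for it. However, the approach you outline---converting coercivity into properness (the content of the paper's Lemma~\ref{LEM-APP:Rn-proper}) and then invoking Theorem~\ref{PROP-APP:NR} with $M=N=\Rn$ to obtain bijectivity, followed by the local inverse function theorem for $C^1$-regularity of the inverse---is precisely the strategy the paper uses to prove the singular-point variant, Theorem~\ref{THM-APP:global-inverse}. So your argument is entirely in line with the paper's methodology, and indeed is the natural specialisation of that argument to the case where there is no singular point.
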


This theorem goes back to Hadamard
\cite{Hadamard:planes-CRAS-1906, Hadamard:Ponctuelles-BSMF-1906,Hadamard:Oeuvres}.
In fact, in 1972 W. B. Gordon wrote
``{\em This theorem goes back at least to Hadamard, but it does not appear to be
`well-known'. Indeed, I have found that most people do not believe it when
they see it and that the skepticism of some persists until they see two proofs.}''
The reason behind this is that while we know that the function is locally a $C^{1}$-diffeomorphism
by the usual local inverse function theorem, 
the condition that $|f(y)|\to\infty$ as $|y|\to\infty$, guarantees that the function is
both {\em injective} and, more importantly, {\em surjective} on the whole of $\Rn$.
And indeed, W. B. Gordon proceeds in \cite{Gordon:inverse-function-1972}
by giving two different proofs for it, for
$C^{2}$ and for $C^{1}$ mappings.

\medskip
It turns out that for dimensions $n\geq 3$
the conclusion remains still valid even if we
relax the assumption, in some sense rather 
substantially, almost removing it at a finite number of points. 
While often this is not the case (the famous one being the 
`hairy ball theorem', which fails completely if we assume that the vector field
may be not differentiable at one point), the theory of covering spaces
assures that it is the case here.
In fact, here we have the following:

\begin{thm}\label{THM-APP:global-inverse}
Let $n\geq 3$. Let $a\in\Rn$.
Let $f:\Rn\to\Rn$ be such that 
$f$ is $C^1$ on $\Rn\backslash \{a\}$, with
$\det Df\not=0$ on $\Rn\backslash \{a\}$, and that $f$ is
continuous at $a$. 
Let $b:=f(a)$, and assume that 
$f(\Rn\backslash \{a\})\subset \Rn\backslash \{b\}$
and that $|y|\to\infty$ implies $|f(y)|\to\infty$.
Then 
the mapping $f:\Rn\to\Rn$ is a global homeomorphism and its restriction
$f:\Rn\backslash \{a\}\to \Rn\backslash \{b\}$
is a global $C^1$-diffeomorphism.
\end{thm}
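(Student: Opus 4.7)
The plan is to apply Theorem \ref{PROP-APP:NR} to the restriction $\wt f := f|_{\Rn\backslash\{a\}} : \Rn\backslash\{a\}\to \Rn\backslash\{b\}$, and then promote the resulting diffeomorphism to a global homeomorphism on $\Rn$ by using the continuity of $f$ at $a$ together with the coercivity hypothesis.

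First I would check the hypotheses of Theorem \ref{PROP-APP:NR} for $M=\Rn\backslash\{a\}$, $N=\Rn\backslash\{b\}$ and the map $\wt f$. Both $M$ and $N$ are connected oriented $C^{1}$-manifolds without boundary of dimension $n$, and because $n\geq 3$ the space $N$ is simply connected (it is homotopy equivalent to $\Sn^{\,n-1}$). The map $\wt f$ takes values in $N$ by the assumption $f(\Rn\backslash\{a\})\subset\Rn\backslash\{b\}$, is $C^{1}$, and has nonvanishing Jacobian by hypothesis. The only nontrivial point is properness. For a compact $K\subset N$, continuity of $f$ on $\Rn$ makes $f^{-1}(K)$ closed in $\Rn$; the coercivity assumption $|y|\to\infty\Rightarrow|f(y)|\to\infty$ makes it bounded (otherwise a sequence $y_j\in f^{-1}(K)$ with $|y_j|\to\infty$ would force $|f(y_j)|\to\infty$, contradicting $f(y_j)\in K$); and $a\notin f^{-1}(K)$ because $f(a)=b\notin K$. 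Hence $f^{-1}(K)=\wt f^{\,-1}(K)$ is compact in $\Rn$ and contained in $M$, so compact in $M$. Theorem \ref{PROP-APP:NR} then gives that $\wt f$ is a bijection, and the inverse function theorem applied at each point, combined with the nonvanishing Jacobian, upgrades this to a $C^{1}$-diffeomorphism $\wt f:\Rn\backslash\{a\}\to\Rn\backslash\{b\}$.

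It remains to upgrade to a homeomorphism on the whole of $\Rn$. Since $f(a)=b$ and $\wt f$ is a bijection between the complements, the full map $f:\Rn\to\Rn$ is bijective. Continuity of $f$ everywhere is given, so the only missing point is continuity of $f^{-1}$ at $b$. Suppose $z_j\to b$ in $\Rn$ and set $y_j:=f^{-1}(z_j)$; I would show $y_j\to a$ by a subsequence argument. The sequence $(y_j)$ must be bounded: otherwise along a subsequence $|y_{j_k}|\to\infty$, so $|z_{j_k}|=|f(y_{j_k})|\to\infty$, contradicting $z_j\to b$. Hence every subsequence has a further convergent subsequence $y_{j_k}\to y_\infty$; by continuity of $f$, $f(y_\infty)=b$, and by injectivity of $f$, $y_\infty=a$. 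Therefore the whole sequence converges to $a$, which gives continuity of $f^{-1}$ at $b$; continuity at all other points follows from the diffeomorphism on the complements.

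The main obstacle I expect is the properness verification, as it is the only place where the three hypotheses (continuity of $f$ at $a$, the containment $f(\Rn\backslash\{a\})\subset\Rn\backslash\{b\}$, and the coercivity) are used together in an essential way; once properness is in hand, the rest of the argument is a fairly mechanical consequence of Theorem \ref{PROP-APP:NR} and the local inverse function theorem, together with a standard sequential argument at the singular point.
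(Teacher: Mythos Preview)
Your proposal is correct and follows essentially the same route as the paper: reduce to Theorem~\ref{PROP-APP:NR} on $M=\Rn\backslash\{a\}$, $N=\Rn\backslash\{b\}$ after checking properness of the restriction, upgrade the resulting bijection to a $C^{1}$-diffeomorphism via the local inverse function theorem, and then handle the singular point separately. The only cosmetic differences are that the paper packages the properness verification into a chain of short lemmas (Lemmas~\ref{LEM-APP:compacts}--\ref{LEM-APP:Rn-proper}) and obtains continuity of $f^{-1}$ by showing $f$ is an open map (Lemma~\ref{LEM-APP:Rn-homeo}), whereas you argue both points directly with sequences; the underlying content is the same.
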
 



While general topological considerations as outlined above are possible here,
we prefer to also give an elementary proof in Section \ref{SEC:proofs} of the fact that
Theorem \ref{PROP-APP:NR} implies Theorem \ref{THM-APP:global-inverse},
also noting that exactly the same proof yields the following further
version:

\begin{thm}\label{THM-APP:global-inverse2}
Let $n\geq 3$. 
Let $A\subset\Rn$ be a closed set. 
Let $f:\Rn\to\Rn$ be such that 
$f$ is $C^1$ on $\Rn\backslash A$, with
$\det Df\not=0$ on $\Rn\backslash A$, that $f$ is
continuous and injective on $A$, and that
$\Rn\backslash f(A)$ is simply connected.
Assume that 
$f(\Rn\backslash A)
\subset \Rn\backslash f(A)$ and that
$|y|\to\infty$ implies $|f(y)|\to\infty$. 
Then 
the mapping $f:\Rn\to\Rn$ is a global homeomorphism and its restriction
$f:\Rn\backslash A\to \Rn\backslash f(A)$
is a global $C^1$-diffeomorphism.
\end{thm}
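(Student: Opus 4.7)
The plan is to apply Theorem~\ref{PROP-APP:NR} to the restriction $f|_M \colon M \to N$, where $M := \Rn\setminus A$ and $N := \Rn\setminus f(A)$. Since $N$ is simply connected, it is in particular open, so $f(A)$ is closed and both $M$ and $N$ are open subsets of $\Rn$, hence orientable $n$-dimensional $C^{1}$-manifolds without boundary. The map $f|_M$ is $C^{1}$ with nowhere-vanishing Jacobian and, by the assumption $f(\Rn\setminus A)\subset N$, takes values in $N$. The only nontrivial hypothesis of Theorem~\ref{PROP-APP:NR} left to verify is properness of $f|_M$.

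To this end, fix a compact set $K\subset N$. Since $f(A)$ is closed and disjoint from $K$, one has $\delta := \mathrm{dist}(K, f(A))>0$. First, $f^{-1}(K)\cap M$ is bounded in $\Rn$: otherwise a sequence $y_j$ in it with $|y_j|\to\infty$ would satisfy $|f(y_j)|\to\infty$ by the growth hypothesis, contradicting $f(y_j)\in K$. Second, $f^{-1}(K)\cap M$ is bounded away from $A$: otherwise one could extract a subsequence $y_{j_k}\to y^{*}$ with $\mathrm{dist}(y_{j_k},A)\to 0$, so $y^{*}\in A$, and continuity of $f$ across $A$ would give $f(y_{j_k})\to f(y^{*})\in f(A)$, contradicting $\mathrm{dist}(f(y_{j_k}), f(A))\geq \delta$. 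Hence $f^{-1}(K)\cap M$ is closed and bounded in $\Rn$ at positive distance from $A$, so it is compact in $M$.

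Theorem~\ref{PROP-APP:NR} now gives that $f|_M \colon M \to N$ is surjective and, because $N$ is simply connected, also injective; the local inverse function theorem upgrades this bijection to a $C^{1}$-diffeomorphism. Gluing with the injection $f|_A\colon A\to f(A)$, whose image is by construction disjoint from $f(M)=N$, yields that $f\colon \Rn\to\Rn$ is a continuous bijection. Since it is proper by the growth hypothesis, it is a closed map between locally compact Hausdorff spaces, and hence a homeomorphism.

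The main obstacle is the properness verification, specifically the second step above controlling preimages near $A$, where the continuity of $f$ across $A$ and the closedness of $f(A)$ (furnished by the simple connectedness of its complement) are used in an essential way. A secondary subtlety, implicit in the application of Theorem~\ref{PROP-APP:NR}, is the connectedness of $M$: if $M$ had several components one would apply the theorem to each and use the single-valuedness of $f$ together with the surjectivity onto the simply connected target $N$ to rule out extra sheets; in the intended applications $A$ is small enough (e.g.\ a single point, as in Theorem~\ref{THM-APP:global-inverse}) that $M$ is connected for free.
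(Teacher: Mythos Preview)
Your argument follows the same route as the paper: verify that the restriction $f|_M\colon M\to N$ is proper, invoke Theorem~\ref{PROP-APP:NR}, and then use properness of $f$ on all of $\Rn$ to upgrade the resulting bijection to a homeomorphism (the paper phrases this last step via Lemma~\ref{LEM-APP:Rn-homeo}, you via closedness of proper maps; these are equivalent). Your direct properness check for $f|_M$---bounded by the growth hypothesis, bounded away from $A$ by continuity of $f$ across $A$---is a clean replacement for the paper's Lemmas~\ref{LEM-APP:compacts}--\ref{COR-APP:compacts} in this more general setting.

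One correction: the inference ``$N$ is simply connected, hence open'' is not valid; simple connectedness says nothing about openness in an ambient space. The right reason $f(A)$ is closed (so that $N$ is an open manifold) is that $f\colon\Rn\to\Rn$ is continuous and, by the growth hypothesis together with Lemma~\ref{LEM-APP:Rn-proper}, proper, hence a closed map; since $A$ is closed, so is $f(A)$. Your flag about connectedness of $M$ is well taken: the paper, which simply asserts that ``exactly the same proof yields'' Theorem~\ref{THM-APP:global-inverse2}, does not address it either, and strictly speaking Theorem~\ref{PROP-APP:NR} requires $M$ connected.
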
 

Given Theorem \ref{THM-APP:global-inverse}, we can apply it to derive 
Theorem \ref{COR-APP:global-inverse-hom}:

\begin{proof}[Proof of Theorem \ref{COR-APP:global-inverse-hom}]
First, let us extend $f$ to $\Rn$ by setting $f(0)=0$ and show
that $f$ is continuous at $0$.
We observe that $f({\mathbb S}^{n-1})$ has a finite maximum.
Let $\xi_j\to 0$, and $\xi_j\not=0$ for all $j$. Then
$$
 |f(\xi_j)|=|\xi_j|^\varkappa 
 \abs{f\p{\frac{\xi_j}{|\xi_j|}}}\leq C|\xi_j|^\varkappa\to 0,
$$
so that $f$ is continuous at $0$.

Let us now check that other conditions of Theorem \ref{THM-APP:global-inverse}
are satisfied.
We observe that $f({\mathbb S}^{n-1})$ has a positive minimum
$\min_{|\xi|=1} |f(\xi)|= c_0>0$.
Indeed, if $f(\omega)=0$
for some $\omega\in{\mathbb S}^{n-1}$,
then $f(t\omega)(=t^\kappa f(\omega))=0$ for any $t>0$.
Differentiating it in $t$, we have $\omega=0$
since the Jacobian of $f$ never vanishes on $\Rn\backslash 0$,
which is a contradiction. 
Then we have
$$
 |f(\xi)|=|\xi|^\varkappa 
 \abs{f\p{\frac{\xi}{|\xi|}}}\geq c_0|\xi|^\varkappa,\quad \xi\neq0,
$$
which induces that $f(\Rn\backslash 0)\subset \Rn\backslash 0$
and that $|y|\to\infty$ implies $|f(y)|\to\infty$.
Therefore, by Theorem \ref{THM-APP:global-inverse},
$f:\Rn\to\Rn$ is a homeomorphism, and
$f^{-1}$ is $C^k$ on $\Rn\backslash 0$ by the usual local inverse
function theorem. Let us finally show that $f^{-1}$ is positively
homogeneous of order $1/\varkappa$. Indeed, for every
$\tau>0$ and $\xi\not=0$ we have
$f^{-1}(\tau^\varkappa f(\xi))=f^{-1}(f(\tau\xi))=\tau\xi$.
Since $f$ is invertible, $\eta=f(\xi)\not=0$, and we have
$f^{-1}(\tau^\varkappa \eta)=\tau f^{-1}(\eta)$, or
$f^{-1}(\tau \eta)=\tau^{1/\varkappa} f^{-1}(\eta)$. 
\end{proof}

\section{Proofs}
\label{SEC:proofs}

First we observe that 
in the setting of Theorem \ref{THM-APP:global-inverse},
by translation (by $a$) in $x$ and
by subtracting $b$ from $f$, we may assume without
loss of generality that $a=b=0$.
To prove Theorem \ref{THM-APP:global-inverse},
we start with preliminary statements.

\begin{lem}\label{LEM-APP:compacts}
Let $F\subset\Rn\backslash 0$. Then $F$ is compact
in $\Rn\backslash 0$ if and only if it is compact in $\Rn$.
\end{lem}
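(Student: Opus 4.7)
The plan is to reduce this to the general topological principle that compactness is an intrinsic property, independent of any ambient space. Since $\Rn\backslash 0$ already carries the subspace topology inherited from $\Rn$, the subspace topology on $F\subset\Rn\backslash 0$ coming through $\Rn\backslash 0$ coincides with the subspace topology inherited directly from $\Rn$. Both notions of ``$F$ is compact'' then refer to the compactness of the same topological space, and so must agree.

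For a concrete verification I would give the open cover argument in both directions. If $F$ is compact in $\Rn$ and $\{V_\alpha\}$ is an open cover of $F$ by sets open in $\Rn\backslash 0$, I would write $V_\alpha=U_\alpha\cap(\Rn\backslash 0)$ with $U_\alpha$ open in $\Rn$; since $F\subset\Rn\backslash 0$, the family $\{U_\alpha\}$ still covers $F$ in $\Rn$, and extracting a finite subcover and intersecting back with $\Rn\backslash 0$ produces a finite subcover in $\Rn\backslash 0$. Conversely, if $F$ is compact in $\Rn\backslash 0$ and $\{U_\alpha\}$ is an open cover of $F$ in $\Rn$, then $\{U_\alpha\cap(\Rn\backslash 0)\}$ is an open cover of $F$ in $\Rn\backslash 0$, and a finite subcover yields a finite subcollection of the original $U_\alpha$'s that already covers $F$ in $\Rn$.

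The only mildly subtle step is the direction from compactness in $\Rn\backslash 0$ to compactness in $\Rn$, where one might worry that $F$ accumulates at the origin. The open cover argument above handles this implicitly; alternatively, I would invoke Heine--Borel by noting that compactness in $\Rn\backslash 0$ forces $F$ to be bounded and closed in $\Rn\backslash 0$, and that $0$ cannot be a limit point of $F$ in $\Rn$ (otherwise the open cover of $F$ in $\Rn\backslash 0$ by complements of shrinking closed balls around $0$ would admit no finite subcover). Hence $F$ is closed and bounded in $\Rn$, and so compact there. I expect this accumulation-at-the-origin point to be the only place one needs to pause.
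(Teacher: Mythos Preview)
Your proposal is correct and the concrete open-cover verification you outline is exactly the argument the paper gives, with the same passage between $U_\alpha$ and $U_\alpha\cap(\Rn\backslash 0)$ in both directions. The additional remarks on compactness being intrinsic and the Heine--Borel alternative are fine supplements, but the paper's proof is just your second paragraph.
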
 
\begin{proof}
Assume that $F\subset \Rn\backslash0$ is compact in
$\Rn\backslash 0$. Let $F\subset \bigcup_\alpha V_\alpha$ for a 
family of sets $V_\alpha$ which are open in $\Rn$. Then
$$
 F\subset \p{\bigcup_\alpha V_\alpha}\cap (\Rn\backslash 0)
 =\bigcup_\alpha \p{V_\alpha\cap (\Rn\backslash 0)},
$$
so that $F$ is covered by a family of sets 
$V_\alpha\cap (\Rn\backslash 0)$ which are open in $\Rn\backslash 0$. 
Since $F$ is compact in
$\Rn\backslash 0$, there is a finite subfamily $V_j$, $j=1,\cdots,m$,
such that 
$$F\subset  \bigcup_{j=1}^m \p{V_j \cap (\Rn\backslash 0)}
 =\p{\bigcup_{j=1}^m V_j}\cap (\Rn\backslash 0).$$
Hence $F\subset \bigcup_{j=1}^m V_j$, so that $F$ is compact in
$\Rn$.

Conversely, assume that $F\subset\Rn\backslash 0$ is compact in
$\Rn$, and let 
$F\subset \bigcup_\alpha U_\alpha$, for a family of sets
$U_\alpha\subset\Rn\backslash 0$ which are open in 
$\Rn\backslash 0$. Then
$U_\alpha=V_\alpha \cap (\Rn\backslash 0)$, for 
some $V_\alpha$ open in $\Rn$. Hence $F\subset \bigcup_\alpha V_\alpha$,
and by compactness of $F$ in $\Rn$, there is a finite subcovering
$F\subset \bigcup_{j=1}^m V_j$.
Since $F\subset\Rn\backslash 0$, we have
$$
F\subset \p{\bigcup_{j=1}^m V_j}\cap (\Rn\backslash 0)=
\bigcup_{j=1}^m \p{V_j \cap (\Rn\backslash 0)}=
\bigcup_{j=1}^m U_j,
$$
which proves that $F$ is compact in $\Rn\backslash 0$.
\end{proof} 

We recall that a mapping $f$ is called proper if $f^{-1}(K)$
is compact whenever $K$ is compact.

\begin{lem}\label{COR-APP:compacts}
Let $f:\Rn\to\Rn$ be proper and such that
$f(0)=0$ and $f(\Rn\backslash 0)\subset \Rn\backslash 0$.
Then the restriction $f:\Rn\backslash 0\to \Rn\backslash 0$
is proper.
\end{lem}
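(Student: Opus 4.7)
The plan is to prove this by a direct application of Lemma \ref{LEM-APP:compacts}, which already equates compactness in $\Rn\backslash 0$ with compactness in $\Rn$ for subsets of $\Rn\backslash 0$. So the work is essentially bookkeeping, and the only thing to verify is that the preimage under the restriction coincides with the preimage under the unrestricted map $f:\Rn\to\Rn$.

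Let $K \subset \Rn\backslash 0$ be compact (in the topology of $\Rn\backslash 0$). By Lemma \ref{LEM-APP:compacts}, $K$ is also compact as a subset of $\Rn$. Since $f:\Rn\to\Rn$ is proper, the preimage $f^{-1}(K)$ is compact in $\Rn$. The hypothesis $f(0)=0$ together with $0\notin K$ gives $0\notin f^{-1}(K)$, so $f^{-1}(K)\subset \Rn\backslash 0$; the hypothesis $f(\Rn\backslash 0)\subset\Rn\backslash 0$ ensures that this full preimage is precisely the preimage of $K$ under the restricted map $f:\Rn\backslash 0\to \Rn\backslash 0$.

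Thus $f^{-1}(K)$ is a subset of $\Rn\backslash 0$ which is compact in $\Rn$. Applying the other direction of Lemma \ref{LEM-APP:compacts}, it is compact in $\Rn\backslash 0$, proving the properness of the restriction.

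There is no substantial obstacle here; the only subtle point worth stating clearly is that because $0$ is mapped to $0$ and $K$ avoids $0$, the preimage of $K$ under the restriction is identical to $f^{-1}(K)$, so no ambiguity arises between the two notions of preimage.
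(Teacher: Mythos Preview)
Your proof is correct and follows essentially the same approach as the paper: take $K$ compact in $\Rn\backslash 0$, use Lemma~\ref{LEM-APP:compacts} to view it as compact in $\Rn$, apply properness of $f$, exclude $0$ from $f^{-1}(K)$ via $f(0)=0\notin K$, and apply Lemma~\ref{LEM-APP:compacts} again. The only difference is that you make explicit the identification of the restricted preimage with the full preimage, which the paper leaves implicit.
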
 
\begin{proof}
Let $K\subset \Rn\backslash 0$ be compact in
$\Rn\backslash 0$. By Lemma \ref{LEM-APP:compacts} it is
compact in $\Rn$, and, since $f$ is proper, the set
$f^{-1}(K)$ is compact in $\Rn$. We notice that
if $0\in f^{-1}(K)$ then we would have $0=f(0)\in K$, which
is impossible since $K\subset \Rn\backslash 0$. Hence
$f^{-1}(K)\subset \Rn\backslash 0$, and by
Lemma \ref{LEM-APP:compacts} again, the set
$f^{-1}(K)$ is compact in $\Rn\backslash 0$. Hence
the restriction $f:\Rn\backslash 0\to \Rn\backslash 0$
is proper.
\end{proof}

\begin{lem}\label{LEM-APP:Rn-proper}
Let $f:\Rn\to\Rn$ be continuous everywhere. Then
$f$ is proper if and only if
$|y|\to\infty$ implies $|f(y)|\to\infty$.
\end{lem}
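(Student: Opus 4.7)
The plan is to treat the two directions separately using only the Heine--Borel characterisation of compactness in $\Rn$ (a set is compact iff it is closed and bounded) together with the continuity of $f$.

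For the forward implication, I would argue by contradiction. Assume $f$ is proper but that $|f(y)|\not\to\infty$ as $|y|\to\infty$. Then there exist $R>0$ and a sequence $y_k\in\Rn$ with $|y_k|\to\infty$ and $|f(y_k)|\le R$ for all $k$. The closed ball $\overline{B(0,R)}$ is compact in $\Rn$, so by properness $f^{-1}(\overline{B(0,R)})$ is compact, hence bounded. But $y_k\in f^{-1}(\overline{B(0,R)})$ for every $k$, contradicting $|y_k|\to\infty$.

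For the converse, assume $|y|\to\infty$ implies $|f(y)|\to\infty$, and let $K\subset\Rn$ be compact. Since $K$ is closed and $f$ is continuous, $f^{-1}(K)$ is closed in $\Rn$. It remains to show $f^{-1}(K)$ is bounded, after which Heine--Borel gives compactness. If $f^{-1}(K)$ were unbounded, there would be a sequence $y_k\in f^{-1}(K)$ with $|y_k|\to\infty$; by hypothesis this forces $|f(y_k)|\to\infty$, which contradicts $f(y_k)\in K$ since $K$ is bounded.

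Both directions are short and use only elementary properties of $\Rn$; I do not foresee a real obstacle. The only mild subtlety is the use of continuity in the converse to ensure that $f^{-1}(K)$ is closed, which is why this direction genuinely needs the hypothesis that $f$ is continuous on all of $\Rn$ (the forward direction does not use continuity at all, just the properness definition).
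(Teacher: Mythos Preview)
Your proof is correct and follows essentially the same approach as the paper: the paper proves the ``if'' direction exactly as you do (closedness of $f^{-1}(K)$ from continuity, boundedness by contradiction via a sequence $y_j$ with $|y_j|\to\infty$), and simply asserts that the converse is clear, whereas you spell it out. Your observation that continuity is only needed for the ``if'' direction is accurate and a nice remark.
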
 
\begin{proof}
We show the if part.
Let $K\subset \Rn$ be compact. Then it is closed and hence
$f^{-1}(K)\subset\Rn$ is closed. Suppose 
$f^{-1}(K)$ is not bounded. Then there is a sequence
$y_j\in f^{-1}(K)$ such that $|y_j|\to\infty$. Hence
$f(y_j)\in K$ and also $|f(y_j)|\to\infty$ by the assumption
on $f$, which yields a contradiction with the boundedness
of $K$. The converse implication is clearly also true.
\end{proof} 

\begin{lem}\label{LEM-APP:Rn-homeo}
Let $f:\Rn\to\Rn$ be proper, injective, and continuous.
Then $f$ is an open map.
\end{lem}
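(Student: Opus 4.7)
The plan is to combine the closed-map property that follows from properness with the classical Brouwer invariance of domain. These are the two natural ingredients: properness controls what happens ``at infinity'' and gives closedness, while invariance of domain is the tool that supplies openness for injective continuous maps in matching dimensions.

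First I would show that $f$ is a closed map. Let $C\subset\Rn$ be closed and let $y_j=f(x_j)$ with $x_j\in C$ converge to some $y\in\Rn$. The set $K:=\{y\}\cup\{y_j:j\geq 1\}$ is compact, so by properness $f^{-1}(K)$ is compact in $\Rn$ and contains the sequence $\{x_j\}$. Passing to a convergent subsequence $x_{j_k}\to x$, closedness of $C$ gives $x\in C$, and continuity of $f$ yields $f(x)=y$; hence $y\in f(C)$, so $f(C)$ is closed. Next I would invoke Brouwer's invariance of domain: a continuous injective map from an open subset of $\Rn$ into $\Rn$ is automatically open. Applied to $f$ on the open set $\Rn$ itself, this gives the conclusion directly.

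If one prefers to make the role of properness explicit, the two ingredients can be assembled as follows: invariance of domain makes $f(\Rn)$ open in $\Rn$, the closed-map property makes $f(\Rn)$ closed in $\Rn$, and connectedness of $\Rn$ then forces $f(\Rn)=\Rn$. Thus $f$ is bijective, and for any open $U\subset\Rn$ injectivity yields
\[
f(U)=\Rn\setminus f(\Rn\setminus U),
\]
which is open because $\Rn\setminus U$ is closed and $f$ is a closed map.

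The main obstacle is that invariance of domain is unavoidable: properness alone delivers only the closed-map property and cannot rule out that $f(\Rn)$ is a proper closed subset with empty interior (as happens, say, for closed embeddings of lower-dimensional Euclidean spaces). Some genuinely topological input tied to source and target having the same dimension $n$ is essential, and invariance of domain supplies exactly that. Everything else in the argument is routine point-set topology using properness and continuity.
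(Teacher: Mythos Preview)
Your argument is correct; invariance of domain alone already makes a continuous injection $\Rn\to\Rn$ open, and your alternative packaging via the closed-map property is also fine. The paper takes a different, ostensibly more elementary route that avoids naming invariance of domain: assuming $f(U)$ is not open, it picks $a\in U$ with $f(a)\in\partial f(U)$, selects $y_j\in\Rn\setminus U$ with $f(y_j)\to f(a)$, uses properness to extract a subsequence $y_{j}'\to b\in\Rn\setminus U$, and concludes $f(b)=f(a)$ with $b\neq a$, contradicting injectivity. The delicate step is the existence of such $y_j$: a sequence approaching $f(a)$ from outside $f(U)$ need not lie in the image of $f$ at all unless one already knows $f$ is (locally) surjective near $f(a)$ --- precisely the dimension-matching input your closing remark anticipates. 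In the paper the lemma is only invoked after bijectivity has been established, so the issue is harmless there; but for the lemma in the stated generality your explicit appeal to invariance of domain is the cleaner and more complete argument, at the cost of citing a heavier topological theorem.
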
 
\begin{proof}
Let us assume that $f(U)$ is not open
for an open subset $U\subset\Rn$.
Then there is a point $a\in U$ such that $f(a)$ is on the boundary
of $f(U)$, and we can construct a sequence $y_j\in\Rn\backslash U$
such that $f(y_j)\to f(a)$.
Since $f$ is proper, there exists a subsequence $y_j'$
which converges to some point $b\in\Rn\backslash U$.
Note that $b\not=a$.
Since $f$ is continuous, $f(y_j')\to f(b)$,
but we also have $f(y_j')\to f(a)$ which contradicts to the
fact that $f$ is injective.
\end{proof}

The following result is a straight forward consequence of Theorem
\ref{PROP-APP:NR}.

\begin{cor}\label{PROP-APP:onto}
Let $n\geq 2$.
Let $f:\Rn\to\Rn$ be proper and such that 
and $f(0)=0$ and
$f(\Rn\backslash 0)\subset \Rn\backslash 0$.
Moreover, assume that $f$ is $C^1$ on $\Rn\backslash 0$, with
$\det Df\not=0$ on $\Rn\backslash 0$. Then 
the restriction $f:\Rn\backslash 0\to \Rn\backslash 0$
is surjective.
If $n\geq3$ in addition,
$f:\Rn\backslash 0\to \Rn\backslash 0$
is also injective.
\end{cor}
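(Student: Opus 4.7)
The plan is to invoke Theorem \ref{PROP-APP:NR} directly with $M = N = \Rn\setminus 0$, verifying each hypothesis in turn. First I would observe that for $n\geq 2$, the punctured Euclidean space $\Rn\setminus 0$ is a connected, oriented, $n$-dimensional $C^1$-manifold without boundary, inheriting its smooth structure and orientation from $\Rn$. Connectedness is precisely where the hypothesis $n \geq 2$ first enters, since for $n=1$ the punctured line has two components and Theorem \ref{PROP-APP:NR} does not apply.

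Next I would verify the properness of the restricted map. Since $f:\Rn\to\Rn$ is proper by assumption, with $f(0)=0$ and $f(\Rn\setminus 0)\subset\Rn\setminus 0$, Lemma \ref{COR-APP:compacts} immediately yields that the restriction $f:\Rn\setminus 0\to\Rn\setminus 0$ is also proper. Combined with the given $C^1$-regularity and the non-vanishing of $\det Df$ on $\Rn\setminus 0$, all hypotheses of Theorem \ref{PROP-APP:NR} are now met, and its first conclusion delivers the surjectivity claim.

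For the injectivity statement, I would appeal to the second conclusion of Theorem \ref{PROP-APP:NR}, which requires the codomain $N = \Rn\setminus 0$ to be simply connected. This is the case exactly when $n\geq 3$, since $\Rn\setminus 0$ is homotopy equivalent to $\mathbb{S}^{n-1}$, and the latter is simply connected precisely when $n-1\geq 2$. This explains, and is the reason for, the extra hypothesis $n\geq 3$ in the injectivity half of the statement.

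I do not anticipate any serious obstacle here: the substantive analytic point, namely the properness of the restriction, has already been packaged into Lemma \ref{COR-APP:compacts}, and the remainder is just careful bookkeeping, matching the conditions $n\geq 2$ and $n\geq 3$ to the connectedness and simple connectedness of the punctured space $\Rn\setminus 0$, respectively. Consequently the proof should be a short, essentially one-line invocation of Theorem \ref{PROP-APP:NR} after Lemma \ref{COR-APP:compacts} is cited.
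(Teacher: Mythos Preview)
Your proposal is correct and follows essentially the same approach as the paper: cite Lemma \ref{COR-APP:compacts} to obtain properness of the restriction, then apply Theorem \ref{PROP-APP:NR} with $M=N=\Rn\setminus 0$, using simple connectedness of $\Rn\setminus 0$ for $n\geq 3$ to get injectivity. Your write-up is in fact a bit more explicit than the paper's, spelling out why $n\geq 2$ is needed for connectedness and $n\geq 3$ for simple connectedness.
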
 
\begin{proof}
By Lemma \ref{COR-APP:compacts}, the restriction of $f$
is a proper map from $M=\Rn\backslash 0$ to $N=\Rn\backslash 0$.
Note that $N$ is simply connected if $n\geq3$.
Then Theorem \ref{PROP-APP:NR} implies the statement.
\end{proof}

With all these facts, Theorem  \ref{THM-APP:global-inverse} is immediate:

\begin{proof}[Proof of Theorem \ref{THM-APP:global-inverse}]
By Corollary \ref{PROP-APP:onto} and Lemma \ref{LEM-APP:Rn-proper},
the map $f:\Rn\backslash 0\to \Rn\backslash 0$ is bijective.
Hence it is a global $C^1$ diffeomorphism by the usual local inverse
function theorem.
Furthermore, the map $f:\Rn\to \Rn$ is also bijective
since $f(0)=0$, hence the global inverse $f^{-1}:\Rn\to \Rn$ exists
and is continuous by Lemma \ref{LEM-APP:Rn-homeo}.
Since $f:\Rn\to \Rn$ is also continuous, it is a homeomorphism.
\end{proof}

\par
\bigskip
\par\noindent
{\it Acknowledgement.} The authors would like to thank
Professor Adi Adimurthi for valuable remarks on the first version of
our manuscript, leading to its considerable improvement.


\end{document}